\begin{document}
\allowdisplaybreaks

\newcommand{\arXivNumber}{1905.08655}

\renewcommand{\PaperNumber}{081}

\FirstPageHeading

\ShortArticleName{A Note on the Derivatives of Isotropic Positive Definite Functions on the Hilbert Sphere}

\ArticleName{A Note on the Derivatives of Isotropic Positive\\ Definite Functions on the Hilbert Sphere}

\Author{Janin J\"AGER}

\AuthorNameForHeading{J.~J\"ager}

\Address{Lehrstuhl Numerische Mathematik, Justus-Liebig University,\\
Heinrich-Buff Ring 44, 35392 Giessen, Germany}
\Email{\href{mailto:janin.jaeger@math.uni-giessen.de}{janin.jaeger@math.uni-giessen.de}}

\ArticleDates{Received May 22, 2019, in final form October 16, 2019; Published online October 23, 2019}

\Abstract{In this note we give a recursive formula for the derivatives of isotropic positive definite functions on the Hilbert sphere. We then use it to prove a conjecture stated by Tr\"ubner and Ziegel, which says that for a positive definite function on the Hilbert sphere to be in $C^{2\ell}([0,\pi])$, it is necessary and sufficient for its $\infty$-Schoenberg sequence to satisfy $\sum\limits_{m=0}^{\infty}a_m m^{\ell}<\infty$.}

\Keywords{positive definite; isotropic; Hilbert sphere; Schoenberg sequences}

\Classification{33B10; 33C45; 42A16; 42A82; 42C10}

\section{Introduction and main results}
In the last five years there has been a tremendous number of publications stating new results on positive definite functions on spheres, see for example \cite{Arafat2018,Beatson2014, Berg2017,Bissiri2019, Gneiting2013,Hubbert2015, Nie2018, Truebner2017,Xu2018}. Isotropic positive definite functions are used in approximation theory, where they are often referred to as spherical radial basis functions \cite{Hubbert2001,Beatson2017,Beatson2018,Castell2004} and are for example applied in geostatistics and physiology \cite{ Fornberg2015,Jaeger2016}. They are also of importance in statistics where they occur as correlation functions of homogeneous random fields on spheres \cite{Lang2015}.

A function $g\colon \mathbb{S}^{d}\times \mathbb{S}^{d} \rightarrow \mathbb{R}$ is called positive definite on the $d$-dimensional sphere
\begin{gather*}\mathbb{S}^{d}=\big \lbrace \xi \in \mathbb{R}^{d+1}\colon \Vert \xi\pdfoutput=1 \Vert_2=1 \big\rbrace \end{gather*} if it satisfies
\begin{gather*}\sum_{\xi \in \Xi}\sum_{\zeta \in \Xi} \lambda_{\xi} \lambda_{\zeta} g(\xi,\zeta) \geq 0, \end{gather*}
for any finite subset $\Xi \subset \mathbb{S}^{d}$ of distinct points on $\mathbb{S}^{d}$ and all $\lambda_{\xi}\in \mathbb{R}$. It is called strictly positive definite if the above inequality is strict unless $\lambda_{\xi}=0$ for all $\xi \in \Xi$.

Further, a function $g\colon \mathbb{S}^{d}\times \mathbb{S}^{d} \rightarrow \mathbb{R}$ is called isotropic if there exists a univariate function $\phi\colon [0,\pi]\rightarrow \mathbb{R}$ for which
\begin{gather*} g(\xi,\zeta)=\phi(\rho(\xi,\zeta)),\qquad \forall\, \xi, \zeta \in \mathbb{S}^{d},\end{gather*}
where $\rho(\xi,\zeta)=\arccos\big(\xi^T\zeta\big)$ is the geodesic distance between $\xi$ and $\zeta$.

The class of isotropic positive definite function on spheres has received more attention during the last years, even though the theory was in fact started by Schoenberg in
1942. He showed in~\cite{Schoenberg1942b} that:

\begin{theorem}[Schoenberg,~\cite{Schoenberg1942b}]\label{theorem1}
Every $\phi\colon [0,\pi]\rightarrow \mathbb{R}$ that is positive definite on $\mathbb{S}^{d}$ can be represented as
\begin{gather*} \phi(\theta)=\sum_{k=0}^\infty a_{k,d} \frac{C_{k}^{\lambda}(\cos(\theta))}{C_k^{\lambda}(1)},\qquad \theta \in[0,\pi],
\end{gather*}
where $a_{k,d} \geq 0$, for all $k$, and $\sum\limits_{k=0}^{\infty} a_{k,d} <\infty$, $\lambda:=(d-1)/2$, and the $C_{k}^{\lambda}$ are the Gegenbauer polynomials as defined in {\rm \cite[formula~(8.930)]{Gradshteyn2014}}.
\end{theorem}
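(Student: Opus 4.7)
The plan is to reduce the claimed Gegenbauer expansion to a Mercer-type spherical harmonic expansion of the bivariate kernel $g(\xi,\zeta)=\phi(\rho(\xi,\zeta))$, exploiting the addition theorem to translate between the two representations, and using isotropy plus positive definiteness to pin down the form and signs of the coefficients. Throughout I assume the (implicit) continuity of $\phi$, which makes $g$ a continuous kernel on the compact space $\mathbb{S}^d\times\mathbb{S}^d$.

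The steps I would carry out are as follows. First, I would expand $g$ as an $L^2$-series in the orthonormal basis $\{Y_{k,j}\}$ of spherical harmonics on $\mathbb{S}^d$,
\begin{gather*}
g(\xi,\zeta)=\sum_{k,\ell\geq 0}\sum_{j=1}^{N(k,d)}\sum_{j'=1}^{N(\ell,d)} c_{(k,j),(\ell,j')}\,Y_{k,j}(\xi)\,Y_{\ell,j'}(\zeta),
\end{gather*}
where $N(k,d)$ is the dimension of the degree-$k$ harmonics. Second, using isotropy $g(Q\xi,Q\zeta)=g(\xi,\zeta)$ for all $Q\in O(d+1)$ together with Schur's lemma applied to the irreducible action of $O(d+1)$ on each space of degree-$k$ harmonics, I would force the coefficient matrix to be diagonal and constant on each level, i.e.\ $c_{(k,j),(\ell,j')}=\tilde a_k\,\delta_{k\ell}\,\delta_{jj'}$. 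Third, I would invoke the addition theorem
\begin{gather*}
\sum_{j=1}^{N(k,d)}Y_{k,j}(\xi)\,Y_{k,j}(\zeta)=\frac{N(k,d)}{\omega_d}\,\frac{C_k^{\lambda}\bigl(\xi^{T}\zeta\bigr)}{C_k^{\lambda}(1)}
\end{gather*}
to rewrite the double series as a single Gegenbauer series with coefficients $a_{k,d}$ proportional to $\tilde a_k$. Fourth, I would verify $a_{k,d}\geq 0$ by testing positive definiteness against measures built from the individual zonal harmonics, observing that each zonal kernel $\xi\mapsto\sum_j Y_{k,j}(\xi)Y_{k,j}(\zeta_0)$ is itself positive definite so that negativity of any single $\tilde a_k$ could be exposed by a finite discretization of a well-chosen spherical harmonic. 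Fifth, I would evaluate at $\xi=\zeta$, i.e.\ $\theta=0$, to obtain $\phi(0)=\sum_{k}a_{k,d}<\infty$ and thereby establish summability.

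The main obstacle is convergence: to make the first and last steps rigorous one must show that the spherical-harmonic series converges pointwise, and in particular at $\theta=0$. The natural route is to convolve $\phi$ against isotropic smoothing kernels (e.g.\ Poisson-type kernels on $\mathbb{S}^d$), apply the decomposition to the smoothed function where the expansion converges uniformly by the Peter-Weyl theorem, and then use the non-negativity of the coefficients together with monotone-convergence arguments to pass to the limit. This is the only delicate point; everything else is representation-theoretic bookkeeping once the addition theorem and Schur's lemma are in place.
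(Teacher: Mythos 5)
The paper does not prove this statement: Theorem~\ref{theorem1} is Schoenberg's classical 1942 result, quoted with a citation to \cite{Schoenberg1942b} and used as a black box, so there is no in-paper argument to compare yours against. Judged on its own terms, your outline is the standard modern proof and is essentially sound. The route via an $L^2$ spherical-harmonic double expansion, Schur's lemma for the diagonal $O(d+1)$-action, and the addition theorem is the representation-theoretic repackaging of Schoenberg's original computation, in which he works directly with the Gegenbauer coefficients $\int_0^\pi \phi(\theta)\,C_k^{\lambda}(\cos\theta)\sin^{d-1}(\theta)\,\mathrm{d}\theta$ and deduces their nonnegativity from the addition theorem together with positive definiteness; the two arguments buy the same thing, yours being cleaner conceptually at the cost of invoking Peter--Weyl machinery. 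You also correctly identify the genuinely delicate step, namely summability of the coefficients, and Abel/Poisson summation combined with monotone convergence of the nonnegative terms at $\theta=0$ is exactly the standard way to close it.

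A few details you should still nail down. First, continuity of $\phi$ is not merely a convenience: the statement is false without it, since $\vert C_k^{\lambda}(t)\vert \le C_k^{\lambda}(1)$ on $[-1,1]$ makes any series with $\sum_k a_{k,d}<\infty$ uniformly convergent and hence continuous, so a discontinuous positive definite function admits no such representation; the paper's phrasing silently includes continuity in the definition of the class, and you are right to flag it. Second, your nonnegativity step should be phrased as approximating $\iint g(\xi,\zeta)Y_{k,j}(\xi)Y_{k,j}(\zeta)\,\mathrm{d}\sigma(\xi)\,\mathrm{d}\sigma(\zeta)$ by Riemann sums of the form $\sum_{\xi}\sum_{\zeta}\lambda_\xi\lambda_\zeta g(\xi,\zeta)$ with $\lambda_\xi$ proportional to $Y_{k,j}(\xi)$; continuity of $g$ makes this legitimate. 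Third, once $\sum_k a_{k,d}<\infty$ is in hand you must upgrade the $L^2$ identity to a pointwise one, which follows because both sides are continuous. Finally, the boundary case $d=1$ needs a remark: there $\lambda=0$, the ratio $C_k^{\lambda}(\cos\theta)/C_k^{\lambda}(1)$ must be read as its limit $\cos(k\theta)$, and the degree-$k$ harmonic spaces are two-dimensional but still absolutely irreducible for $O(2)$, so Schur's lemma applies unchanged. None of these is a gap in the idea, only in the write-up.
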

The sequence $(a_{k,d})_{k\in\mathbb{N}_0}$ is referred to as a $d$-Schoenberg sequence. A criterion for the strict positive definiteness of such functions was given by Chen et al.\ in~\cite{Chen2003}.

A variety of 18 open problems on strictly and non-strictly positive definite spherical functions has been posed in the supplement material of Gneiting's article \cite{Gneiting2013}. Some of the results on these problems are described in \cite{Arafat2018,Bissiri2019,Nie2018,Truebner2017}. This note will provide some additional information to the known solution of Problem~6.

Problem 6 was concerned with the smoothness properties of the members of the class of positive definite functions on $\mathbb{S}^{d}$.
Tr\"ubner and Ziegel gave a solution to the problem in \cite{Truebner2017, Ziegel2012} and in the course of their proof stated an interesting connection between the existence of the derivative of such a function at zero and the decay of its $d$-Schoenberg sequence. The result was also described in the paper \cite[Theorem~1]{Guinness2016} by Guinness and Fuentes.
\begin{lemma}[{Tr\"ubner and Ziegel, \cite[Lemma~2.1a]{Truebner2017}}]\label{lemma1}
Let $\ell\geq1$. Suppose $\phi$ is positive definite on~$\mathbb{S}^{d}$ with $d$-Schoenberg sequence $(a_{k,d})_{k\in \mathbb{N}_0}$. Then, $\phi^{(2\ell)}(0)$ exists if and only if $\sum\limits_{k=0}^{\infty} a_{k,d}k^{2\ell}$ converges.
\end{lemma}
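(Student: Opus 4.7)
The plan is to exploit the Schoenberg representation $\phi(\theta) = \sum_{k=0}^{\infty} a_{k,d} P_k(\theta)$, with $P_k(\theta) := C_k^{\lambda}(\cos\theta)/C_k^{\lambda}(1)$ and $\lambda = (d-1)/2$. Three classical structural inputs on $P_k$ drive the argument. First, the non-negative cosine linearisation
\begin{gather*}
C_k^\lambda(\cos\theta) = \sum_{j=0}^{k} \frac{(\lambda)_j\,(\lambda)_{k-j}}{j!\,(k-j)!}\cos((k-2j)\theta),
\end{gather*}
valid for $\lambda > 0$ (with $P_k(\theta) = \cos(k\theta)$ in the special case $d=1$). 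Second, Bernstein's inequality for the trigonometric polynomial $P_k$ of degree $k$ gives $|P_k^{(j)}(\theta)| \le k^j$ uniformly on $[-\pi,\pi]$ for every $j \ge 0$. Third, a recursion for $P_k^{(2\ell)}(0)$ coming from the Gegenbauer ODE
\begin{gather*}
P_k''(\theta) + (d-1)\cot\theta\, P_k'(\theta) + k(k+d-1)\,P_k(\theta) = 0,
\end{gather*}
which, by differentiating and evaluating at $0$ (using evenness of $P_k$ and the Laurent expansion of $\cot\theta$), expresses $P_k^{(2\ell)}(0)$ as an explicit linear combination of the earlier even derivatives. Induction on $\ell$ then delivers the two facts I actually need: $(-1)^\ell P_k^{(2\ell)}(0) \ge 0$ and the asymptotic $(-1)^\ell P_k^{(2\ell)}(0) = c_\ell k^{2\ell} + O(k^{2\ell-1})$ with $c_\ell > 0$.

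Sufficiency is then immediate. If $\sum_k a_{k,d} k^{2\ell} < \infty$, then because $k^j \le k^{2\ell}$ for $1 \le j \le 2\ell$, the Bernstein bound gives uniform and absolute convergence of $\sum_k a_{k,d} P_k^{(j)}(\theta)$ for every such $j$. Term-by-term differentiation is therefore justified, and in particular $\phi^{(2\ell)}(0)$ exists and equals $\sum_k a_{k,d} P_k^{(2\ell)}(0)$.

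For necessity I plan to use the $2\ell$-th centered difference
\begin{gather*}
\delta_h^{2\ell} f(0) := \sum_{j=0}^{2\ell}(-1)^{j}\binom{2\ell}{j} f\big((\ell-j)h\big).
\end{gather*}
Existence of $\phi^{(2\ell)}(0)$ forces $\phi$ to possess all lower derivatives in a neighbourhood of $0$, and Taylor's theorem then yields $\delta_h^{2\ell}\phi(0)/h^{2\ell} \to \phi^{(2\ell)}(0)$ as $h \to 0$. A direct identity
\begin{gather*}
\delta_h^{2\ell}[\cos(m\,\cdot)](0) = (-1)^\ell\,2^{2\ell}\sin^{2\ell}(mh/2),
\end{gather*}
combined with the non-negative cosine linearisation, gives $(-1)^\ell \delta_h^{2\ell} P_k(0) \ge 0$ for every $k$ and every $h$. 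Fatou's lemma applied to the non-negative series $\sum_k a_{k,d}\,(-1)^\ell \delta_h^{2\ell} P_k(0)/h^{2\ell}$ then yields
\begin{gather*}
(-1)^\ell\phi^{(2\ell)}(0) \ge \sum_{k=0}^{\infty} a_{k,d}\,(-1)^\ell P_k^{(2\ell)}(0),
\end{gather*}
and the asymptotic $(-1)^\ell P_k^{(2\ell)}(0) \sim c_\ell k^{2\ell}$ forces the convergence $\sum_k a_{k,d} k^{2\ell} < \infty$.

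The main obstacle I anticipate is packaging the sign and size information on $P_k^{(2\ell)}(0)$ cleanly. The sign $(-1)^\ell \delta_h^{2\ell} P_k(0) \ge 0$ rests entirely on the non-negative cosine expansion of Gegenbauer polynomials; without it the Fatou step collapses. Isolating the leading term $c_\ell k^{2\ell}$ from the Gegenbauer-ODE recursion is a bookkeeping exercise rather than a deep difficulty, but it must be carried out explicitly enough to guarantee $c_\ell > 0$, so that the Fatou lower bound controls $\sum_k a_{k,d} k^{2\ell}$ and not some weaker moment. Everything else — the Bernstein bound, the Taylor expansion, and the interchange of limit and sum through Fatou — is standard once the sign and the asymptotic are secured.
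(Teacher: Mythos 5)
The paper does not prove this lemma: it is imported verbatim from Tr\"ubner and Ziegel \cite[Lemma~2.1a]{Truebner2017}, so there is no in-paper argument to compare against. Judged on its own, your proof is essentially correct and is a genuinely different, more self-contained route than the one in \cite{Truebner2017}, which (via \cite{Ziegel2012}) leans on dimension-walk relations between Schoenberg sequences and convolution-root techniques rather than working directly with the Gegenbauer expansion in the given dimension. Your three ingredients all hold: the non-negative cosine linearisation of $C_k^{\lambda}(\cos\theta)$ writes $P_k(\theta)=\sum_j p_{k,j}\cos((k-2j)\theta)$ with $p_{k,j}\geq 0$ and $\sum_j p_{k,j}=1$, which gives both $\|P_k\|_\infty\leq 1$ (so Bernstein yields $\|P_k^{(j)}\|_\infty\leq k^j$ and sufficiency is routine) and the sign $(-1)^\ell\delta_h^{2\ell}P_k(0)=2^{2\ell}\sum_j p_{k,j}\sin^{2\ell}((k-2j)h/2)\geq 0$ needed for Fatou; the identity $\delta_h^{2\ell}[\cos(m\,\cdot)](0)=(-1)^\ell 2^{2\ell}\sin^{2\ell}(mh/2)$ checks out; and Taylor--Peano does give $\delta_h^{2\ell}\phi(0)/h^{2\ell}\to\phi^{(2\ell)}(0)$. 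Two points deserve tightening. First, the derivatives at $0$ are one-sided on $[0,\pi]$, so you should work with the even extension $\phi(|\theta|)$ (harmless here since every $P_k$ is even, but it is the standard technicality in this literature). Second, the step you yourself flag as the main obstacle --- extracting $(-1)^\ell P_k^{(2\ell)}(0)\sim c_\ell k^{2\ell}$ with $c_\ell>0$ from the ODE recursion --- is real bookkeeping, but you can bypass it entirely: from the linearisation, $(-1)^\ell P_k^{(2\ell)}(0)=\sum_j p_{k,j}(k-2j)^{2\ell}$, and Jensen's inequality applied to the probability weights $p_{k,j}$ gives $\sum_j p_{k,j}(k-2j)^{2\ell}\geq\bigl(\sum_j p_{k,j}(k-2j)^2\bigr)^{\ell}=\bigl(-P_k''(0)\bigr)^{\ell}=\bigl(k(k+d-1)/d\bigr)^{\ell}\geq d^{-\ell}k^{2\ell}$, which is exactly the lower bound the Fatou step needs, with no asymptotics and no induction. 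With that substitution your argument is complete and arguably simpler than the published one.
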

We show that a different connection holds for functions which are positive definite on all spheres. This function class is equivalent to the class of functions positive definite on the Hilbert sphere $\mathbb{S}^{\infty}$. For these functions Schoenberg derived a simple representation in~\cite{Schoenberg1942b}. The characterisation of strictly positive definite functions on $\mathbb{S}^{\infty}$ was later completed by Menegatto in~\cite{Menegatto1994}.

\begin{theorem}[Schoenberg,~\cite{Schoenberg1942b}]\label{theorem2}
A function $\phi$ is positive definite on $\mathbb{S}^{d}$ for all $d\geq 1$ if and only if it has the form
\begin{gather*}
\phi(\theta)=\sum_{m=0}^{\infty} a_m (\cos(\theta))^m,
\end{gather*}
where $a_m \geq0$, for all $m\in\mathbb{N}_0$, and $\sum\limits_{m=0}^{\infty} a_m <\infty$.
\end{theorem}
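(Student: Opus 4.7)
My plan is to treat the two implications separately. For the \emph{sufficiency} direction, I would assume $\phi(\theta)=\sum_{m=0}^\infty a_m\cos^m(\theta)$ with $a_m\geq 0$ and $\sum a_m<\infty$, and show positive definiteness on every $\mathbb{S}^d$. Given $d\geq 1$, a finite configuration $\{\xi_1,\dots,\xi_n\}\subset \mathbb{S}^d$, and scalars $c_1,\dots,c_n\in\mathbb{R}$, I would expand
\[
\sum_{i,j}c_i c_j\phi(\rho(\xi_i,\xi_j))=\sum_{m=0}^\infty a_m\sum_{i,j}c_i c_j\bigl(\xi_i^T\xi_j\bigr)^m,
\]
where the interchange of sums is justified by $|\cos\theta|\leq 1$ and $\sum a_m<\infty$. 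Since the Gram matrix $\bigl[\xi_i^T\xi_j\bigr]_{i,j}$ is positive semi-definite, the Schur product theorem implies that each Hadamard power $\bigl[(\xi_i^T\xi_j)^m\bigr]_{i,j}$ is positive semi-definite, and the outer sum of non-negative terms is therefore $\geq 0$.

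For the \emph{necessity} direction, I would invoke Theorem~\ref{theorem1} separately for each $d\geq 1$ to obtain non-negative $d$-Schoenberg sequences $(a_{k,d})_{k\in\mathbb{N}_0}$ with $\sum_k a_{k,d}=\phi(0)$ and
\[
\phi(\theta)=\sum_{k=0}^\infty a_{k,d}\,\frac{C_k^{(d-1)/2}(\cos\theta)}{C_k^{(d-1)/2}(1)}.
\]
The classical asymptotic I would rely on, provable from the explicit power-series coefficients of $C_k^\lambda$, is
\[
\lim_{\lambda\to\infty}\frac{C_k^\lambda(x)}{C_k^\lambda(1)}=x^k,
\]
uniformly on $[-1,1]$ for each fixed $k$. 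Since $a_{k,d}\in[0,\phi(0)]$ for every $d$, a diagonal (Helly-type) extraction yields a subsequence $d_n\to\infty$ and limits $a_k\in[0,\phi(0)]$ with $a_{k,d_n}\to a_k$. Passing the displayed identity through $n\to\infty$ would give $\phi(\theta)=\sum_{m=0}^\infty a_m\cos^m(\theta)$, and Fatou's lemma would supply $\sum a_m\leq \phi(0)<\infty$.

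The main obstacle is justifying the interchange of limit and infinite sum in the last step, since the dominating terms $a_{k,d_n}$ themselves vary with $n$. I would split at a truncation level $N$: for the head $\sum_{k<N}$, combining pointwise convergence of the coefficients with the uniform convergence above handles the limit; for the tail, I would use the standard bound $|C_k^{\lambda}(\cos\theta)|\leq C_k^{\lambda}(1)$ (valid for $\lambda=(d-1)/2\geq 0$) together with $\sum_{k\geq N}a_{k,d_n}\leq \phi(0)-\sum_{k<N}a_{k,d_n}$, and then send $n\to\infty$ followed by $N\to\infty$. A cleaner alternative that sidesteps this delicate interchange is to recover each candidate coefficient as an integral of $\phi\circ\arccos$ against an appropriate orthogonality weight, and to pass to the limit in $d$ inside that finite-measure integral via bounded convergence.
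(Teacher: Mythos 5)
The paper offers no proof of Theorem~\ref{theorem2}; it is quoted from Schoenberg, so your proposal has to be judged on its own. Your sufficiency direction is correct and complete: absolute convergence justifies the interchange, and the Schur product theorem applied to the Gram matrix $\big[\xi_i^T\xi_j\big]$ handles each term. The necessity direction follows the classical skeleton (Gegenbauer asymptotics $C_k^\lambda(x)/C_k^\lambda(1)\to x^k$, Helly extraction, Fatou), all of which is sound, but the step you yourself flag as the main obstacle is not closed by the truncation argument you propose. The bound $\big|C_k^\lambda(\cos\theta)\big|\le C_k^\lambda(1)$ gives, after letting $n\to\infty$,
\begin{gather*}
\Big|\phi(\theta)-\sum_{k<N}a_k\cos^k\theta\Big|\ \le\ \phi(0)-\sum_{k<N}a_k,
\end{gather*}
and the right-hand side decreases, as $N\to\infty$, to $\varepsilon:=\phi(0)-\sum_{k}a_k$, which Fatou only guarantees to be $\ge 0$. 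If spectral mass escapes to $k\to\infty$ along the subsequence $d_n$, then $\varepsilon>0$ and you obtain the representation only up to a uniform error $\varepsilon$; evaluating at $\theta=0$ shows the inequality is saturated there, so no amount of massaging this particular estimate will recover the identity. The ``cleaner alternative'' of extracting each coefficient by integration against the orthogonality weight has exactly the same defect: it identifies the candidate $a_k$ but says nothing about whether the resulting series exhausts $\phi$.

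To close the gap you need a tightness input that rules out escape of mass. One concrete route staying within your framework: Gegenbauer's integral representation
\begin{gather*}
\frac{C_k^\lambda(\cos\theta)}{C_k^\lambda(1)}=\frac{\Gamma(\lambda+\tfrac12)}{\Gamma(\lambda)\Gamma(\tfrac12)}\int_0^\pi\big(\cos\theta+\mathrm{i}\sin\theta\cos\psi\big)^k\sin^{2\lambda-1}\psi\,\mathrm{d}\psi
\end{gather*}
yields, for $\theta$ bounded away from $0$ and $\pi$, the quantitative estimate $\sup_{k\ge N}\big|C_k^\lambda(\cos\theta)/C_k^\lambda(1)\big|\to 0$ as $N$ and $\lambda$ tend to infinity (the integrand has modulus $\big(1-\sin^2\theta\sin^2\psi\big)^{k/2}$ while the weight concentrates at $\psi=\pi/2$). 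This kills the tail uniformly in the coefficients, gives $\phi(\theta)=\sum_k a_k\cos^k\theta$ for $\theta\in(0,\pi)$, and continuity (the limit series converges uniformly on $[-1,1]$ by the $M$-test) extends the identity to the endpoints, forcing $\varepsilon=0$ after the fact. Alternatively, the modern treatments (Schoenberg's original argument, revisited by Berg, Peron and Porcu) avoid the issue entirely by characterising $\phi(\arccos(\cdot))$ as an entrywise positivity preserver and deducing absolute monotonicity from non-negative divided differences. Either way, an additional idea beyond $\big|C_k^\lambda\big|\le C_k^\lambda(1)$ is required.
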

The series $(a_m)_{m\in\mathbb{N}_0}$ is referred to as an $\infty$-Schoenberg sequence.
In this note, we will prove the following theorem, which was shown to be true for $\ell\in\lbrace 1,2\rbrace$ in \cite{Truebner2017}, and in the process prove an interesting recursion formula for the derivatives of these positive definite functions.
\begin{theorem}\label{theorem3}
Let $\phi$ be positive definite on $\mathbb{S}^{\infty}$ with $\infty$-Schoenberg sequence $(a_m)_{m\in \mathbb{N}_0}$. Then $\phi^{(2\ell)}(0)$ exists if and only if $\sum\limits_{m=0}^{\infty}a_m m^{\ell}$ converges.
\end{theorem}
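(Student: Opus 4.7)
My plan hinges on the Chebyshev-type expansion
\begin{equation*}
(\cos\theta)^{m}=2^{-m}\sum_{k=0}^{m}\binom{m}{k}\cos\bigl((m-2k)\theta\bigr),
\end{equation*}
which, after $2\ell$-fold differentiation at $\theta=0$, gives $\frac{d^{2\ell}}{d\theta^{2\ell}}(\cos\theta)^{m}\bigl|_{0}=(-1)^{\ell}P_\ell(m)$ with $P_\ell(m):=\mathbb{E}[S_m^{2\ell}]$ the $(2\ell)$th moment of a sum $S_m=Y_1+\dots+Y_m$ of i.i.d.\ Rademacher variables. Standard moment estimates identify $P_\ell$ as a polynomial in $m$ of degree exactly $\ell$, non-negative on $\mathbb{N}_0$, so $\sum_m a_m m^{\ell}<\infty$ is equivalent to $\sum_m a_m P_\ell(m)<\infty$; the same expansion also furnishes the uniform Bernstein-type bound $\bigl|\tfrac{d^{k}}{d\theta^{k}}(\cos\theta)^{m}\bigr|\le\mathbb{E}|S_m|^{k}\le C_k\,m^{k/2}$.

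The ``if'' direction then falls out quickly: assuming $\sum a_m m^{\ell}<\infty$, H\"older gives $\sum a_m m^{k/2}<\infty$ for every $0\le k\le 2\ell$, so the formally differentiated series $\sum_m a_m\frac{d^{k}}{d\theta^{k}}(\cos\theta)^{m}$ converges uniformly on $[0,\pi]$; iterating the standard termwise-differentiation theorem yields $\phi\in C^{2\ell}([0,\pi])$ with $\phi^{(2\ell)}(0)=(-1)^{\ell}\sum_m a_m P_\ell(m)$.

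For the converse I would induct on $\ell$, the base cases $\ell\in\{1,2\}$ being already treated in Tr\"ubner--Ziegel. Granting the statement for $\ell-1$, the existence of $\phi^{(2\ell)}(0)$ forces $\phi^{(2\ell-2)}(0)$ to exist, whence $\sum a_m m^{\ell-1}<\infty$ by the inductive hypothesis; setting $R_m(\theta):=(\cos\theta)^{m}-\sum_{k<\ell}(-1)^{k}P_{k}(m)\theta^{2k}/(2k)!$, Peano's form of Taylor's theorem applied to the even function $\phi$ says that as $\theta\to 0^{+}$,
\begin{equation*}
\sum_{m}a_{m}\,\frac{(-1)^{\ell}R_{m}(\theta)}{\theta^{2\ell}}\longrightarrow\frac{(-1)^{\ell}\phi^{(2\ell)}(0)}{(2\ell)!}.
\end{equation*}
Fatou's lemma applied to non-negative summands then pins down $\sum_m a_m P_\ell(m)<\infty$, provided one has the key sign-and-magnitude estimate
\begin{equation*}
0\le(-1)^{\ell}R_{m}(\theta)\le\frac{P_{\ell}(m)\,\theta^{2\ell}}{(2\ell)!}\qquad\text{for all }\theta\in\mathbb{R}\text{ and }m\ge 0,
\end{equation*}
which is the main obstacle. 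Via the probabilistic identity $R_{m}(\theta)=(-1)^{\ell}\mathbb{E}[G_{\ell}(S_{m}\theta)]$, where $G_{\ell}(y):=(-1)^{\ell}\bigl(\cos y-\sum_{k<\ell}(-1)^{k}y^{2k}/(2k)!\bigr)$ is the signed Taylor tail of $\cos$, the estimate reduces to the scalar inequality $0\le G_{\ell}(y)\le y^{2\ell}/(2\ell)!$ for all $y\in\mathbb{R}$; the upper half is Lagrange's remainder (using $\cos^{(2\ell)}=(-1)^{\ell}\cos$), and the lower half follows by induction from the recursion $G_{\ell}(y)=y^{2\ell-2}/(2\ell-2)!-G_{\ell-1}(y)$, which is essentially the ``recursive formula for the derivatives'' advertised in the introduction.
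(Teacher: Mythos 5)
Your proposal is correct, and it takes a genuinely different route from the paper. The paper never works with $\phi$ directly on $\mathbb{S}^\infty$: it transfers the problem to the circle via the Tr\"ubner--Ziegel criterion (Lemma~\ref{lemma1} with $d=1$) and their Proposition~5.1 relating the $\infty$- and $1$-Schoenberg sequences, so that everything reduces to the asymptotics of $2^{-2j+1}\sum_{n=1}^{j}(2n)^{2\ell}\binom{2j}{j+n}$ (Lemma~\ref{lemma2}), which is in turn established through an explicit recursion for the derivatives of $\cos^{j}$ and the growth of its coefficients (Lemmas~\ref{lemma3} and~\ref{lemma4}). You instead stay on $\mathbb{S}^\infty$ and exploit the identity $\cos^{m}\theta=\mathbb{E}[\cos(S_m\theta)]$ for a Rademacher sum $S_m$: the quantity in Lemma~\ref{lemma2} is exactly $\mathbb{E}\big[S_{2j}^{2\ell}\big]\sim(2\ell-1)!!\,(2j)^{\ell}$, so the paper's central asymptotic becomes a standard moment computation (and identifies $c(\ell)$ explicitly); the sufficiency is a Weierstrass $M$-test via $\mathbb{E}|S_m|^{k}\le C_k m^{k/2}$, which in fact yields the stronger conclusion $\phi\in C^{2\ell}$; and the necessity is Fatou's lemma combined with the classical sign-alternating Taylor bounds $0\le G_\ell(y)\le y^{2\ell}/(2\ell)!$ for the cosine tail, with the induction on $\ell$ (grounded in the Tr\"ubner--Ziegel base cases, though your $\ell=1$ argument would work unaided) supplying the summability needed to subtract off the lower-order Taylor terms. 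The details all check: only the lower bound $(-1)^{\ell}R_m(\theta)\ge 0$ is actually needed for Fatou, the upper bound being a bonus, and $\mathbb{E}[S_m^{2\ell}]\ge m^{\ell}$ by Jensen settles the two-sided comparison with $m^{\ell}$. What the paper's route buys is a verbatim proof of Tr\"ubner--Ziegel's Conjecture~2.2 and the recursion of Lemma~\ref{lemma3}, which may have independent interest; what yours buys is a self-contained argument that bypasses Lemma~\ref{lemma1} and the Schoenberg-sequence relation entirely, while subsuming Lemma~\ref{lemma2} as a corollary.
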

The important difference between Lemma~\ref{lemma1} and Theorem~\ref{theorem3} is the decay property of the Schoenberg sequence which is connected to the smoothness of the kernel. Theorem~\ref{theorem3} is not the limit of Lemma~\ref{lemma1} when $d\rightarrow \infty$. An explanation of the discrepancy between the cases~$\mathbb{S}^{d}$ and~$\mathbb{S}^{\infty}$ is possible using a probabilistic viewpoint. The positive definite functions are used there as covariance functions of stationary isotropic Gaussian processes on spheres. The behaviour of these processes is governed by the decay of the Schoenberg sequences. Faster decay of the sequence induces higher smoothness of the paths of the process.

As described in \cite{Bingham2019}, processes on the Hilbert sphere have very different properties from those on Euclidean spheres because the Hilbert sphere is not locally compact. Gaussian processes on~$\mathbb{S}^{\infty}$ can be discontinuous and locally deterministic, for exact definitions and explanations see~\cite{Bingham2019} and the references therein. One might expect local determinism to be a sign of greater smoothness, as for holomorphic functions in complex analysis, but it is the other way around. The processes are extremely wild. Therefore even with similarly smooth covariance functions, the process in~$\mathbb{S}^{\infty}$ is expected to have Schoenberg sequences with slower decay.

In \cite{Truebner2017} it was proven that Theorem~\ref{theorem3} is equivalent to an interesting series relation introduced as Conjecture~2.2. The conjecture contained a small typographical error in the sign of the exponent of~$2^{2j}$, in the second part of the formula. We can now prove the corrected conjecture which we state in the next lemma, and thereby prove Theorem~\ref{theorem3}.
 \begin{lemma}\label{lemma2}
 For $\ell>1$, there is a constant $c(\ell)>0$ such that, as $j\rightarrow \infty$,
 \begin{gather*}2^{-2j+1}\sum_{n=1}^j(2n)^{2\ell}\binom{2j}{j+n} \sim c(\ell)j^{\ell},\qquad 2^{-2j}\sum_{n=1}^j(2n-1)^{2\ell}\binom{2j-1}{j+n-1}\sim c(\ell)j^{\ell}.\end{gather*}
 \end{lemma}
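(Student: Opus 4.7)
Both sums are, after elementary manipulation, $(2\ell)$th moments of sums of bounded i.i.d.\ symmetric random variables, and I would extract their asymptotics either probabilistically from the central limit theorem with moment convergence, or analytically via Stirling's formula.

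Let $W_n=X_1+\cdots+X_n$ with independent $X_i$ satisfying $P(X_i=\pm 1)=\tfrac12$, so that $P(W_{2j}=2n)=\binom{2j}{j+n}2^{-2j}$ and $P(W_{2j-1}=2n-1)=\binom{2j-1}{j+n-1}2^{-(2j-1)}$. Using the symmetry $W_n\stackrel{d}{=}-W_n$ (together with the vanishing of $(2n)^{2\ell}$ at $n=0$ in the first case, and a pairing of $n$ with $1-n$ in the second), one checks that the first sum equals $E\bigl[W_{2j}^{2\ell}\bigr]$ while the second equals $\tfrac14 E\bigl[W_{2j-1}^{2\ell}\bigr]$. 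Since $W_n$ has variance $n$, once one has moment convergence in the CLT one obtains
\begin{gather*}
E\bigl[W_n^{2\ell}\bigr]\sim (2\ell-1)!!\,n^{\ell}\qquad(n\to\infty),
\end{gather*}
which produces the asserted $\sim c(\ell)j^{\ell}$ asymptotics for both sums (with constants $2^{\ell}(2\ell-1)!!$ and $2^{\ell-2}(2\ell-1)!!$ respectively, so that in any case each sum is of exact order $j^{\ell}$ with a positive constant).

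The only technical step is upgrading the distributional CLT to convergence of $2\ell$th moments, and there are at least three equivalent routes: (i)~uniform integrability of $(W_n/\sqrt n)^{2\ell}$, which follows from the Khintchine-type bound $E\bigl[W_n^{2\ell+2}\bigr]=O\bigl(n^{\ell+1}\bigr)$ proved by induction on~$n$; (ii)~reading moments from the characteristic function $E\bigl[\mathrm{e}^{{\rm i}tW_n}\bigr]=\cos^n t=\exp\bigl(-nt^2/2+O\bigl(nt^4\bigr)\bigr)$ by matching Taylor coefficients; or (iii)~a direct Stirling computation using the local central limit estimate
\begin{gather*}
\frac{1}{2^{2j}}\binom{2j}{j+n}=\frac{1}{\sqrt{\pi j}}\,\mathrm{e}^{-n^{2}/j}\bigl(1+o(1)\bigr)
\end{gather*}
uniformly on $|n|\le j^{2/3}$, after which the sum reduces to a Riemann approximation of a Gaussian integral and a crude Chernoff-type bound disposes of the tail $|n|>j^{2/3}$. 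The main---minor---obstacle throughout is securing those uniform estimates, but this is entirely standard and poses no real difficulty; once handled, the claimed asymptotics and hence, via the reduction already carried out in \cite{Truebner2017}, Theorem~\ref{theorem3} follow at once.
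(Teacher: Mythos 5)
Your argument is correct, and it takes a genuinely different route from the paper. The paper proceeds analytically: it first derives a recursion for the derivatives of $\cos^j$ (Lemma~\ref{lemma3}), proves the asymptotics $b^{j}_{\ell,\ell}\sim c_{\ell,\ell}j^{\ell}$ for the recursion coefficients (Lemma~\ref{lemma4}), and then differentiates the finite Fourier expansions of $\cos^{2j}$ and $\cos^{2j-1}$ exactly $2\ell$ times at $x=0$ to identify the two sums of Lemma~\ref{lemma2} with $b^{2j}_{\ell,\ell}$ and $\tfrac{1}{4}b^{2j-1}_{\ell,\ell}$ respectively. Your probabilistic reading is, underneath, the same computation: since $\cos^m(t)=E\big[{\rm e}^{{\rm i}tW_m}\big]$ is the characteristic function of the simple random walk, the paper's coefficient satisfies $b^{m}_{\ell,\ell}=(-1)^{\ell}\big(\cos^m\big)^{(2\ell)}(0)=E\big[W_m^{2\ell}\big]$, so Lemma~\ref{lemma4} is exactly the moment convergence in the CLT that you invoke, and the Fourier identities the paper differentiates are exactly your symmetry reduction of the moment sum. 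What your route buys is brevity and an explicit constant: you bypass both technical recursion lemmas and identify $c_{\ell,\ell}=(2\ell-1)!!$ in closed form, whereas the paper only obtains the constants recursively; any of your three routes to moment convergence (uniform integrability via $E\big[W_n^{2\ell+2}\big]=O\big(n^{\ell+1}\big)$, Taylor coefficients of $\cos^n t$, or the local CLT with a tail bound) closes the gap by standard means. What the paper's route buys is self-containedness (pure induction, no probabilistic limit theorems) and, as a by-product, the recursion formula for the derivatives of $\cos^j$, which the author presents as an independently useful result. One further point in your favour: you correctly note that the two sums have limits differing by a factor of $4$ (constants $2^{\ell}(2\ell-1)!!$ and $2^{\ell-2}(2\ell-1)!!$); the lemma's statement of a single constant $c(\ell)$ serving both asymptotics is therefore slightly inaccurate --- the paper's own proof produces the same factor of $4$ --- but this is immaterial for Theorem~\ref{theorem3}, where only the order $j^{\ell}$ and the positivity of the constants matter.
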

In Section~\ref{section2} we establish necessary preliminary results which allow us to prove Lemma~\ref{lemma2}.

\section{Preliminaries}\label{section2}
First we will introduce the following lemma which might prove helpful in other areas of the discussion of positive definite functions on the Hilbert sphere.
\begin{lemma}\label{lemma3}
For $\phi(x)=\cos^j(x)$ and $j>\ell$,
\begin{gather*} \phi^{(\ell)}(x)=\sum_{\substack{n_1+n_2=\ell,\\ 0\leq n_2\leq n_1}}(-1)^{n_1} b^{j}_{n_1,n_2}\cos^{j-n_1+n_2}(x)\sin^{n_1-n_2}(x),
\end{gather*}
where the coefficients can be computed recursively by $b^{j}_{0,0}=1$,
\begin{gather*}
b^{j}_{n_1,n_2} =b^{j}_{n_1-1,n_2}(j-(n_1-1)+ n_2)+b^{j}_{n_1,n_2-1}(n_1-(n_2-1)),\qquad 0<n_2 <n_1,\\
b^{j}_{n_1,0} =b^{j}_{\ell,0}=(j-(n_1-1))b^{j}_{\ell -1,0}=\frac{j!}{(j-\ell)!},
\end{gather*}
and in the case of $\ell$ even $b^{j}_{n_2,n_2}=b^{j}_{\ell /2,\ell /2}=b^{j}_{\ell /2,\ell /2-1}$.
\end{lemma}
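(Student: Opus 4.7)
The plan is to prove the formula by induction on the order of differentiation $\ell$, with $j > \ell$ fixed. For $\ell = 0$ the claim reduces to $\cos^{j}(x)$ with the single summand $n_1 = n_2 = 0$, forcing $b^{j}_{0,0} = 1$, which agrees with the stated initial condition. For the inductive step I differentiate the assumed expression for $\phi^{(\ell-1)}$ term by term: each product $\cos^{j-n_1+n_2}(x)\sin^{n_1-n_2}(x)$ splits into two pieces, namely $-(j-n_1+n_2)\cos^{j-n_1+n_2-1}(x)\sin^{n_1-n_2+1}(x)$, which matches the shape of a level-$\ell$ term with shifted index $(n_1+1, n_2)$, and $(n_1-n_2)\cos^{j-n_1+n_2+1}(x)\sin^{n_1-n_2-1}(x)$, matching the shift $(n_1, n_2+1)$. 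The sign $(-1)^{n_1}$ becomes $(-1)^{n_1+1}$ in the first piece, in agreement with the prescribed sign at index $(n_1+1, n_2)$; in the second piece the sign is unchanged, again consistent with the target index.

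Collecting all contributions to a fixed target $(n_1', n_2')$ at level $\ell$ with $0 < n_2' < n_1'$ reproduces exactly the two-term recurrence stated in the lemma. For the pure-cosine boundary $n_2' = 0$ (which forces $n_1' = \ell$), only the first type of contribution exists, since the second would require a source with $n_2 = -1$; this yields $b^{j}_{\ell,0} = (j-\ell+1)\, b^{j}_{\ell-1,0}$, which telescopes to $j!/(j-\ell)!$. For the diagonal boundary $n_1' = n_2' = \ell/2$ (possible only when $\ell$ is even), the first contribution would originate from a source $(\ell/2 - 1, \ell/2)$, which is excluded because that source violates the inductive constraint $n_2 \leq n_1$; only the second contribution survives, with weight $n_1' - (n_2'-1) = 1$, giving $b^{j}_{\ell/2, \ell/2} = b^{j}_{\ell/2, \ell/2-1}$.

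The one subtlety worth verifying is that differentiation never produces a term landing outside the admissible range $n_2' \leq n_1'$. The only candidate for such an escape is a source with $n_1 = n_2$ processed via the second branch, which would land at $(n_1, n_2+1)$ with $n_2 + 1 > n_1$; but the weight $(n_1 - n_2)$ is then zero, so the would-be escape term vanishes identically. With this observation the induction closes cleanly and the lemma follows. The remainder is routine index bookkeeping, and the main (very mild) obstacle is simply aligning the index shifts $(n_1, n_2) \mapsto (n_1+1, n_2)$ and $(n_1, n_2) \mapsto (n_1, n_2+1)$ with the two boundary strata of the index set.
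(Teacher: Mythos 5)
Your proof is correct and takes essentially the same route as the paper: induction on $\ell$, differentiating the assumed expansion term by term, re-indexing the two resulting families via $(n_1,n_2)\mapsto(n_1+1,n_2)$ and $(n_1,n_2)\mapsto(n_1,n_2+1)$, and reading off the recurrence together with the two boundary cases. The only (organizational) difference is that the paper splits the inductive step into separate even and odd $\ell$ cases to handle the diagonal term $\cos^j(x)$, whereas you absorb that case by observing that the weight $n_1-n_2$ vanishes on the diagonal.
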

\begin{proof}We prove the result by induction starting with $\ell=1$.

Let $\ell=1$ and $j\in \mathbb{N}_{>1}$. Then
\begin{gather*}\phi'(x)=-j\cos^{j-1}(x)\sin(x)\end{gather*}
 thereby $b^j_{1,0}=j$. The step of the induction will be proven for odd values of~$\ell$ and even values of~$\ell$ separately.

Let $\ell$ be even, show $\ell\rightarrow \ell +1$. We assume
\begin{gather*} \phi^{(\ell)}(x)=\sum_{\substack{n_1+n_2=\ell,\\ n_2< n_1}}(-1)^{n_1} b^{j}_{n_1,n_2}\cos^{j-n_1+n_2}(x)\sin^{n_1-n_2}(x)+(-1)^{\frac{\ell}{2} }b^{j}_{\ell/2, \ell/2}\cos^j(x).\end{gather*}
Therefore for $j>\ell+1$
\begin{gather*}
 \phi^{(\ell+1)}(x)= \sum_{\substack{n_1+n_2=\ell,\\ n_2< n_1-1}} (-1)^{n_1+1} b^{j}_{n_1,n_2}(j-n_1+n_2)\cos^{j-(n_1+1)+n_2}(x)\sin^{(n_1+1)-n_2}(x)\\
\hphantom{\phi^{(\ell+1)}(x)=}{} +\sum_{\substack{n_1+n_2=\ell,\\0\leq n_2< n_1-1}}(-1)^{n_1}b^{j}_{n_1,n_2}\cos^{j-n_1+n_2+1}(x)(n_1-n_2)\sin^{n_1-(n_2+1)}(x)\\
\hphantom{\phi^{(\ell+1)}(x)=}{} +(-1)^{\ell /2+1}j\, b^j_{\ell/2, \ell/2}\cos^{j-1}(x)\sin(x) \\
 \hphantom{\phi^{(\ell+1)}(x)}{} = \sum_{\substack{\tilde{n}_1+\tilde{n}_2=\ell+1,\\ \tilde{n}_2< \tilde{n}_1-2}}(-1)^{\tilde{n}_1} b^{j}_{\tilde{n}_1-1,\tilde{n}_2}(j-(\tilde{n}_1-1)+\tilde{n}_2)\cos^{j-\tilde{n}_1+\tilde{n}_2}(x)\sin^{\tilde{n}_1-\tilde{n}_2}(x)\\
\hphantom{\phi^{(\ell+1)}(x)=}{}+\sum_{\substack{\tilde{n}_1+\tilde{n}_2=\ell+1,\\ 0 <\tilde{n}_2< \tilde{n}_1}}(-1)^{\tilde{n}_1}b^{j}_{\tilde{n}_1,\tilde{n}_2-1}\cos^{j-\tilde{n}_1+\tilde{n}_2}(x)(\tilde{n}_1-(\tilde{n}_2-1))\sin^{\tilde{n}_1-\tilde{n}_2}(x)\\
\hphantom{\phi^{(\ell+1)}(x)=}{}+(-1)^{\ell /2+1}j\, b^{j}_{\ell/2, \ell/2}\cos^{j-1}(x)\sin(x) \\
\hphantom{\phi^{(\ell+1)}(x)}{} = (-1)^{\ell+1}b^j_{\ell,0}\cos^{j-\ell-1}(x)\sin^{\ell+1}(x)(j-\ell)\\
\hphantom{\phi^{(\ell+1)}(x)=}{}+\sum_{\substack{\tilde{n}_1+\tilde{n}_2=\ell+1,\\ 0< \tilde{n}_2\leq \tilde{n}_1}} (-1)^{\tilde{n}_1}b^{j}_{\tilde{n}_1,\tilde{n}_2}\cos^{j-\tilde{n}_1+\tilde{n}_2}(x)\sin^{\tilde{n}_1-\tilde{n}_2}(x),
\end{gather*}
with $b_{\tilde{n}_1,\tilde{n}_2}$ as defined above.

Let $\ell$ be odd, show $\ell\rightarrow \ell +1$. We assume
\begin{gather*} \phi^{(\ell)}(x)=\sum_{\substack{n_1+n_2=\ell,\\ n_2< n_1}}(-1)^{n_1} b^{j}_{n_1,n_2}\cos^{j-n_1+n_2}(x)\sin^{n_1-n_2}(x).\end{gather*}
Therefore for $j>\ell+1$
\begin{gather*}
 \phi^{(\ell+1)}(x)= \sum_{\substack{n_1+n_2=\ell,\\0\leq n_2< n_1}} (-1)^{n_1+1} b^{j}_{n_1,n_2}(j-n_1+n_2)\cos^{j-(n_1+1)+n_2}(x)\sin^{(n_1+1)-n_2}(x)\\
\hphantom{\phi^{(\ell+1)}(x)=}{} +\sum_{\substack{n_1+n_2=\ell,\\0\leq n_2< n_1}} (-1)^{n_1} b^{j}_{n_1,n_2}\cos^{j-n_1+(n_2+1)}(x)(n_1-n_2)\sin^{n_1-(n_2+1)}(x)\\
\hphantom{\phi^{(\ell+1)}(x)}{} = \sum_{\substack{\tilde{n}_1+\tilde{n}_2=\ell+1,\\ 0\leq \tilde{n}_2< \tilde{n}_1-1}}(-1)^{\tilde{n}_1} b^{j}_{\tilde{n}_1-1,\tilde{n}_2}(j-(\tilde{n}_1-1)+\tilde{n}_2)\cos^{j-\tilde{n}_1+\tilde{n}_2}(x)\sin^{\tilde{n}_1-\tilde{n}_2}(x)\\
\hphantom{\phi^{(\ell+1)}(x)=}{}+\sum_{\substack{\tilde{n}_1+\tilde{n}_2=\ell+1,\\ 1 \leq \tilde{n}_2< \tilde{n}_1+1}}(-1)^{\tilde{n}_1}b^{j}_{\tilde{n}_1,\tilde{n}_2-1}\cos^{j-\tilde{n}_1+\tilde{n}_2}(x)(\tilde{n}_1-(\tilde{n}_2-1))\sin^{\tilde{n}_1-\tilde{n}_2}(x)\\
\hphantom{\phi^{(\ell+1)}(x)}{} = b^{j}_{\ell,0}(j-\ell)(-1)^{\ell+1}\cos^{j-\ell-1}(x)\sin^{\ell+1}(x)\\
\hphantom{\phi^{(\ell+1)}(x)=}{}+\sum_{\substack{\tilde{n}_1+\tilde{n}_2=\ell+1,\\ 0< \tilde{n}_2\leq \tilde{n}_1-1}}(-1)^{\tilde{n}_1} b^{j}_{\tilde{n}_1,\tilde{n}_2}\cos^{j-\tilde{n}_1+\tilde{n}_2}(x)\sin^{\tilde{n}_1-\tilde{n}_2}(x)\\
\hphantom{\phi^{(\ell+1)}(x)=}{}+ (-1)^{(\ell+1)/2}b^{j}_{(\ell+1)/2,(\ell+1)/2}\cos^j(x),
\end{gather*}
with $b^{j}_{\tilde{n}_1,\tilde{n}_2}$ as defined above.
\end{proof}

Now the behaviour of the coefficients $b^{j}_{n_1,n_2}$ for $j\rightarrow \infty$ is described.
\begin{lemma}\label{lemma4}
The coefficients $b^{j}_{n_1,n_2}$ satisfy
\begin{gather} \label{Eq:sim} b^{j}_{n_1,n_2}\sim c_{n_1,n_2}j^{n_1}, \qquad \text{for $j \rightarrow \infty$, for fixed $n_1$, $n_2$},\end{gather}
where $\sim$ means the sequences are asymptotically equivalent.
Here $c_{n_1,n_2}$ are defined recursively by
$c_{1,1}=1$, $c_{n_1,0}=1$ and for $n_1>1$, $1 \leq n_2 <n_1$
\begin{gather*}c_{n_1,n_2}=c_{n_1-1,n_2}+(n_1-n_2+1)c_{n_1,n_2-1}\end{gather*}
and $c_{n_1,n_1}=c_{n_1,n_1-1}$.
\end{lemma}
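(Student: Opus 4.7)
My plan is to prove (\ref{Eq:sim}) by induction on $\ell=n_1+n_2$, propagating the recurrences for $b^{j}_{n_1,n_2}$ given in Lemma~\ref{lemma3} through the $j\to\infty$ limit. The induction hypothesis is that for every index pair with sum at most $\ell-1$, the claimed asymptotic $b^{j}_{n_1,n_2}\sim c_{n_1,n_2}j^{n_1}$ holds, with the constants satisfying the stated recurrence. The base cases $\ell=0$ and $\ell=1$ are immediate: $b^{j}_{0,0}=1$ matches $c_{0,0}=1$, while $b^{j}_{1,0}=j$ confirms $c_{1,0}=1$, and (since $\ell=2$ is even) $b^{j}_{1,1}=b^{j}_{1,0}=j$ fixes $c_{1,1}=1$.

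For the inductive step I would split into three cases. First, for the axis $n_2=0$, the explicit formula $b^{j}_{n_1,0}=j!/(j-n_1)!=j(j-1)\cdots(j-n_1+1)$ immediately gives $b^{j}_{n_1,0}\sim j^{n_1}$, consistent with $c_{n_1,0}=1$. Second, for the interior range $1\leq n_2<n_1$, the recurrence
\begin{gather*}
b^{j}_{n_1,n_2}=b^{j}_{n_1-1,n_2}\bigl(j-(n_1-1)+n_2\bigr)+b^{j}_{n_1,n_2-1}\bigl(n_1-(n_2-1)\bigr)
\end{gather*}
expresses $b^{j}_{n_1,n_2}$ in terms of two quantities at level $\ell-1$. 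By the induction hypothesis these are asymptotic to $c_{n_1-1,n_2}j^{n_1-1}$ and $c_{n_1,n_2-1}j^{n_1}$ respectively; multiplying the first by $(j-(n_1-1)+n_2)\sim j$ and the second by the constant $(n_1-n_2+1)$ and adding yields $b^{j}_{n_1,n_2}\sim\bigl[c_{n_1-1,n_2}+(n_1-n_2+1)c_{n_1,n_2-1}\bigr]j^{n_1}$, matching the stated recurrence for $c_{n_1,n_2}$. Third, for the diagonal $n_2=n_1$ (which occurs only when $\ell$ is even), the identity $b^{j}_{n_1,n_1}=b^{j}_{n_1,n_1-1}$ from Lemma~\ref{lemma3} forces $c_{n_1,n_1}=c_{n_1,n_1-1}$.

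I do not anticipate a serious obstacle; the main care is organisational. The induction must be ordered by the total $\ell=n_1+n_2$ rather than by $n_1$ alone, so that both $b^{j}_{n_1-1,n_2}$ and $b^{j}_{n_1,n_2-1}$ are available from the previous level before $b^{j}_{n_1,n_2}$ is treated. One also needs to verify that the two contributions on the right are both $\Theta(j^{n_1})$ (so that neither is absorbed into the error of the other); this is transparent because $(j-(n_1-1)+n_2)=j+O(1)$ and $(n_1-n_2+1)$ is a positive constant, and all $c_{n_1,n_2}>0$, ensuring no cancellation. Once these bookkeeping points are in place, the asymptotic equivalence and the recurrence for $c_{n_1,n_2}$ fall out simultaneously from a single induction.
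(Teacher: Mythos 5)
Your proposal is correct and follows essentially the same route as the paper: an induction that propagates the recurrence for $b^{j}_{n_1,n_2}$ from Lemma~\ref{lemma3} through the limit $j\to\infty$, using the explicit formula on the axis $n_2=0$, the positivity of all terms to rule out cancellation, and the identity $b^{j}_{n_1,n_1}=b^{j}_{n_1,n_1-1}$ on the diagonal. The only difference is organisational --- you order the induction by the total $\ell=n_1+n_2$ while the paper uses a double induction (outer on $n_1$, inner on $n_2$) --- and both orderings are compatible with the dependency structure of the recurrence.
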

\begin{proof}We show this property by induction over the pairs $(n_1,n_2)$.
For all pairs of coefficients $(n_1,0)$ we have \begin{gather*}b^j_{n_1,0}=\frac{j!}{(j-n_1)!} \sim j^{n_1},\end{gather*} further $b^j_{1,1}=b^j_{1,0}=j\sim j^1.$

We assume \eqref{Eq:sim} holds for all combinations $(n_1,n_2)$ with $n_1\leq n'$ and $n_2\leq n_1$ and for all pairs $(n'+1,n_2)$ up to a certain $ n_2\leq n''< n'$. Then
\begin{align*}
b^{j}_{n'+1,n''+1}&=b^{j}_{n',n''+1}(j-n'+
n''+1)+b^{j}_{n'+1,n''}(n'+1-n'')\\
&\sim c_{n',n''+1} j^{n'}(j-n'+n''+1)+c_{n'+1,n''} j^{n'+1}(n'+1-n'')\\
& \sim \left( c_{n',n''+1}+c_{n'+1,n''}(n'+1-n'')\right) j^{n'+1}\\
& \sim c_{n'+1,n''+1}j^{n'+1}.
\end{align*}
For the last choice of $n''=n'$ we find
\begin{gather*} b^j_{n'+1,n'+1}=b^j_{n'+1,n'}\sim c_{n'+1,n'+1}j^{n'+1}.\tag*{\qed}
\end{gather*}\renewcommand{\qed}{}
\end{proof}

\section{Proof of Theorem~\ref{theorem3}}\label{section3}

 \begin{proof}[Proof of Lemma~\ref{lemma2}]
We use the identities of the powers of $\cos$ from Gradshteyn and Ryzhik \cite[formulas (1.320.5) and (1.320.7)]{Gradshteyn2014}
\begin{gather}
\cos^{2j}(x)=\frac{1}{2^{2j}}\left \lbrace \sum_{k=0}^{j-1}2 \binom{2j}{k}\cos(2(j-k)x)+\binom{2j}{j}\right\rbrace
\end{gather}
and
\begin{gather} \cos^{2j-1}(x)=\frac{1}{2^{2j-2}} \sum_{k=0}^{j-1} \binom{2j-1}{k}\cos((2j-2k-1)x).
\end{gather}
Differentiating each side of the above equations $2\ell$-times, using Lemma~\ref{lemma3} for the left-hand side, and evaluating the derivative at zero we find
\begin{gather*}
b_{\ell,\ell}^{2j}=\frac{1}{2^{2j}} \sum_{k=0}^{j-1}2 \binom{2j}{k}(2(j-k))^{2\ell}
\end{gather*}
and
\begin{gather*}
b_{\ell,\ell}^{2j-1}=\frac{1}{2^{2j-2}} \sum_{k=0}^{j-1} \binom{2j-1}{k}(2j-2k-1)^{2\ell}, \qquad \text{for} \quad j> \ell.
\end{gather*}
The result now follows by applying~\eqref{Eq:sim} and rearranging of the coefficients.
\end{proof}
\begin{proof}[Proof of Theorem~\ref{theorem3}]
Let $\phi$ be positive definite on $\mathbb{S}^d$ for all $d$,
\begin{gather*}\phi(\theta)=\sum_{j=0}^{\infty}a_{j,1}\cos(j\theta)=\sum_{m=0}^{\infty}a_m\cos^m(\theta),\qquad a_{j,1}\geq 0.\end{gather*}
Employing the Ziegel--Tr\"ubner result (Lemma~\ref{lemma1}), we know
that $\phi^{2\ell}(0)$ exists if and only if $\sum\limits_{k=0}^{\infty} a_{k,1}k^{2\ell}$ converges. The following relation between the Schoenberg sequences was proven, also by Ziegel and Tr\"ubner (see \cite[Proposition~5.1]{Truebner2017}):
\begin{gather*}
a_{2n,1}=\sum_{j=n}^{\infty}2^{-2j+1} a_{2j}\binom{2j}{j+n},\qquad
a_{2n-1,1}=\sum_{j=n}^{\infty}2^{-2j}a_{2j-1}\binom{2j-1}{j+n-1}.
 \end{gather*}
 This yields
\begin{align*} \sum_{n=0}^{\infty}a_{n,1}n^{2\ell}&= \sum_{n=0}^{\infty} a_{2n,1}(2n)^{2\ell}+ \sum_{n=1}^{\infty}a_{2n-1,1}(2n-1)^{2\ell}\\
 &= \sum_{n=0}^{\infty} \sum_{j=n}^{\infty}2^{-2j+1} a_{2j}\binom{2j}{j+n}(2n)^{2\ell}
 + \sum_{n=1}^{\infty}\sum_{j=n}^{\infty}2^{-2j}a_{2j-1}\binom{2j-1}{j+n-1}(2n-1)^{2\ell}.
 \end{align*}
This with $a_{n,1},a_m\geq0$ for all $m,n \in \mathbb{N}_0$ and after application of Lemma~\ref{lemma2}, proves the theo\-rem.
\end{proof}

\subsection*{Acknowledgments}

The author was a post-doctoral fellow funded by the Justus Liebig University during the development of this research. I would like to express my gratitude to Professor M.~Buhmann for his helpful comments on the paper. Thanks are also due to the anonymous referees for their thorough advice on how to improve this note.

\pdfbookmark[1]{References}{ref}
\LastPageEnding


\begin{thebibliography}{99}
\footnotesize\itemsep=0pt

\bibitem{Arafat2018}
Arafat A., Gregori P., Porcu E., Schoenberg coefficients and curvature at the
 origin of continuous isotropic positive definite kernels on spheres,
 \href{https://doi.org/10.1016/j.spl.2019.108618}{\textit{Statist. Probab. Lett.}} \textbf{156} (2020), 108618, 6~pages,
 \href{https://arxiv.org/abs/1807.02363}{arXiv:1807.02363}.

\bibitem{Hubbert2001}
Baxter B.J.C., Hubbert S., Radial basis functions for the sphere, in Recent
 Progress in Multivariate Approximation ({W}itten-{B}ommerholz, 2000),
 \textit{Internat. Ser. Numer. Math.}, Vol.~137, \href{https://doi.org/10.1007/978-3-0348-8272-9_4}{Birkh\"{a}user}, Basel, 2001,
 33--47.

\bibitem{Beatson2017}
Beatson R.K., zu~Castell W., Dimension hopping and families of strictly
 positive definite zonal basis functions on spheres, \href{https://doi.org/10.1016/j.jat.2017.04.001}{\textit{J.~Approx.
 Theory}} \textbf{221} (2017), 22--37, \href{https://arxiv.org/abs/1510.08658}{arXiv:1510.08658}.

\bibitem{Beatson2018}
Beatson R.K., zu~Castell W., Thinplate splines on the sphere, \href{https://doi.org/10.3842/SIGMA.2018.083}{\textit{SIGMA}}
 \textbf{14} (2018), 083, 22~pages, \href{https://arxiv.org/abs/1801.01313}{arXiv:1801.01313}.

\bibitem{Beatson2014}
Beatson R.K., zu~Castell W., Xu Y., A {P}\'{o}lya criterion for (strict)
 positive-definiteness on the sphere, \href{https://doi.org/10.1093/imanum/drt008}{\textit{IMA~J. Numer. Anal.}} \textbf{34}
 (2014), 550--568, \href{https://arxiv.org/abs/1110.2437}{arXiv:1110.2437}.

\bibitem{Berg2017}
Berg C., Peron A.P., Porcu E., Schoenberg's theorem for real and complex
 {H}ilbert spheres revisited, \href{https://doi.org/10.1016/j.jat.2018.02.003}{\textit{J.~Approx. Theory}} \textbf{228} (2018),
 58--78, \href{https://arxiv.org/abs/1701.07214}{arXiv:1701.07214}.

\bibitem{Bingham2019}
Bingham N.H., Symons T.L., Gaussian random fields on the sphere and sphere
 cross line, \href{https://doi.org/10.1016/j.spa.2019.08.007}{\textit{Stochastic Process. Appl.}}, {t}o appear,
 \href{https://arxiv.org/abs/1812.02103}{arXiv:1812.02103}.

\bibitem{Bissiri2019}
Bissiri P.G., Menegatto V.A., Porcu E., Relations between {S}choenberg
 coefficients on real and complex spheres of different dimensions,
 \href{https://doi.org/10.3842/SIGMA.2019.004}{\textit{SIGMA}} \textbf{15} (2019), 004, 12~pages, \href{https://arxiv.org/abs/1807.08184}{arXiv:1807.08184}.

\bibitem{Chen2003}
Chen D., Menegatto V.A., Sun X., A necessary and sufficient condition for
 strictly positive definite functions on spheres, \href{https://doi.org/10.1090/S0002-9939-03-06730-3}{\textit{Proc. Amer. Math.
 Soc.}} \textbf{131} (2003), 2733--2740.

\bibitem{Fornberg2015}
Fornberg B., Flyer N., A primer on radial basis functions with applications to
 the geosciences, \textit{CBMS-NSF Regional Conference Series in Applied
 Mathematics}, Vol.~87, \href{https://doi.org/10.1137/1.9781611974041.ch1}{Society for Industrial and Applied Mathematics (SIAM)},
 Philadelphia, PA, 2015.

\bibitem{Gneiting2013}
Gneiting T., Strictly and non-strictly positive definite functions on spheres,
 \href{https://doi.org/10.3150/12-BEJSP06}{\textit{Bernoulli}} \textbf{19} (2013), 1327--1349, \href{https://arxiv.org/abs/1111.7077}{arXiv:1111.7077}.

\bibitem{Gradshteyn2014}
Gradshteyn I.S., Ryzhik I.M., Table of integrals, series, and products, 8th~ed., Academic Press Inc., San Diego, CA, 2014.

\bibitem{Guinness2016}
Guinness J., Fuentes M., Isotropic covariance functions on spheres: some
 properties and modeling considerations, \href{https://doi.org/10.1016/j.jmva.2015.08.018}{\textit{J.~Multivariate Anal.}}
 \textbf{143} (2016), 143--152.

\bibitem{Hubbert2015}
Hubbert S., L\^{e}~Gia Q.T., Morton T.M., Spherical radial basis functions,
 theory and applications, \textit{SpringerBriefs in Mathematics}, \href{https://doi.org/10.1007/978-3-319-17939-1}{Springer}, Cham, 2015.

\bibitem{Jaeger2016}
J\"ager J., Klein A., Buhmann M., Skrandies W., Reconstruction of
 electroencephalographic data using radial basis functions, \href{https://doi.org/10.1016/j.clinph.2016.01.003}{\textit{Clinical
 Neurophysiology}} \textbf{127} (2016), 1978--1983.

\bibitem{Lang2015}
Lang A., Schwab C., Isotropic {G}aussian random fields on the sphere:
 regularity, fast simulation and stochastic partial differential equations,
 \href{https://doi.org/10.1214/14-AAP1067}{\textit{Ann. Appl. Probab.}} \textbf{25} (2015), 3047--3094,
 \href{https://arxiv.org/abs/1305.1170}{arXiv:1305.1170}.

\bibitem{Menegatto1994}
Menegatto V.A., Strictly positive definite kernels on the {H}ilbert sphere,
 \href{https://doi.org/10.1080/00036819408840292}{\textit{Appl. Anal.}} \textbf{55} (1994), 91--101.

\bibitem{Nie2018}
Nie Z., Ma C., Isotropic positive definite functions on spheres generated from
 those in {E}uclidean spaces, \href{https://doi.org/10.1090/proc/14454}{\textit{Proc. Amer. Math. Soc.}} \textbf{147}
 (2019), 3047--3056.

\bibitem{Schoenberg1942b}
Schoenberg I.J., Positive definite functions on spheres, \href{https://doi.org/10.1215/S0012-7094-42-00908-6}{\textit{Duke Math.~J.}}
 \textbf{9} (1942), 96--108.

\bibitem{Truebner2017}
Tr\"{u}bner M., Ziegel J.F., Derivatives of isotropic positive definite
 functions on spheres, \href{https://doi.org/10.1090/proc/13561}{\textit{Proc. Amer. Math. Soc.}} \textbf{145} (2017),
 3017--3031, \href{https://arxiv.org/abs/1603.06727}{arXiv:1603.06727}.

\bibitem{Xu2018}
Xu Y., Positive definite functions on the unit sphere and integrals of {J}acobi
 polynomials, \href{https://doi.org/10.1090/proc/13913}{\textit{Proc. Amer. Math. Soc.}} \textbf{146} (2018), 2039--2048,
 \href{https://arxiv.org/abs/1701.00787}{arXiv:1701.00787}.

\bibitem{Ziegel2012}
Ziegel J., Convolution roots and differentiability of isotropic positive
 definite functions on spheres, \href{https://doi.org/10.1090/S0002-9939-2014-11989-7}{\textit{Proc. Amer. Math. Soc.}} \textbf{142}
 (2014), 2063--2077, \href{https://arxiv.org/abs/1201.5833}{arXiv:1201.5833}.

\bibitem{Castell2004}
zu~Castell W., Filbir F., Radial basis functions and corresponding zonal series
 expansions on the sphere, \href{https://doi.org/10.1016/j.jat.2004.12.018}{\textit{J.~Approx. Theory}} \textbf{134} (2005),
 65--79.

\end{thebibliography}
\end{document}